\theoremstyle{plain}
\newtheorem{theorem}{Theorem}[section]
\newtheorem{prop}[theorem]{Proposition}
\newtheorem{lemma}[theorem]{Lemma}
\newtheorem{lemalph}{Lemma}
\theoremstyle{definition}
\newtheorem*{defn*}{Definition}
\begin{document}

\title[Compactifications of $\mathbb{R}$ with arc-like remainder]{Planarity of compactifications of $\mathbb{R}$ with arc-like remainder}
\author{Andrea Ammerlaan and Logan C. Hoehn}
\date{\today}

\address{Nipissing University, Department of Computer Science \& Mathematics, 100 College Drive, Box 5002, North Bay, Ontario, Canada, P1B 8L7}
\email{ajammerlaan879@my.nipissingu.ca}
\email{loganh@nipissingu.ca}

\thanks{This work was supported by NSERC grant RGPIN-2019-05998 and an NSERC CGS-M award.}

\subjclass[2020]{Primary 54F15, 54C25; Secondary 54F50, 54D35}
\keywords{Plane embedding; compactification; arc-like continuum}

\begin{abstract}
We show that if $X$ is an arc-like continuum, then any continuum which is the union of $X$ and a ray $R$ such that $X \cap R = \emptyset$ and $\overline{R} \setminus R \subseteq X$ can be embedded in the plane $\mathbb{R}^2$.  Further, we prove that any compactification of a line with remainder $X$ is also embeddable in $\mathbb{R}^2$ -- answering a question of Sam B.\ Nadler from 1972.
\end{abstract}

\maketitle

\section{Introduction}
\label{sec:intro}

This paper concerns embeddings of certain kinds of continua in the plane $\mathbb{R}^2$, specifically continua which are the union of an arc-like continuum with a ray or line, which is disjoint from the arc-like continuum (definitions are provided below).  Our main result, Theorem~\ref{thm:line} below, is the answer to a question of Nadler from the 1972 paper \cite[p.229]{nadler1972}.  There, in light of his recent work with Quinn \cite{nadler-quinn1972, nadler-quinn1973} around embeddings of related continua in Euclidean spaces, Nadler asked whether each compactification of a line whose remainder is an arc-like continuum can be embedded in the plane.  Here, we prove the answer to this question is ``yes''.

Throughout, \emph{line} will mean a space homeomorphic to $\mathbb{R}$, \emph{ray} will mean a space homeomorphic to $[0,\infty)$, and an \emph{arc} is a space homeomorphic to a closed, bounded interval, e.g.\ $[-1,1]$.  A \emph{continuum} is a compact, connected, metric space.  A continuum $Y$ is a \emph{compactification} of a space $S$ if there exists a dense subset $S'$ of $Y$ homeomorphic to $S$.  The space $X = Y \setminus S'$ is called the \emph{remainder} in the compactification.  It is well-known that any continuum can be the remainder in a compactification of a line or ray (see e.g.\ \cite{nadler1972}) -- we consider compactifications for which the remainder is an arc-like continuum.

A continuum $X$ is \emph{arc-like} if it is homeomorphic to an inverse limit of arcs.  An \emph{inverse system} of continua is a sequence $\langle X_n,f_n \rangle_{n \geq 1}$ where, for each $n \geq 1$, $X_n$ is a continuum and $f_n \colon X_{n+1} \to X_n$ is a \emph{map}, i.e.\ a continuous function.  %The continua $X_n$ are called \emph{factor spaces} and the maps $f_n$ are called \emph{bonding maps}.
The \emph{inverse limit} of an inverse system $\langle X_n,f_n \rangle_{n \geq 1}$ is the space
\[ \varprojlim \langle X_n,f_n \rangle := \{\langle x_1,x_2,x_3, \ldots \rangle: f_n(x_{n+1}) = x_n \textrm{ for each } n \geq 1\} \subset \prod X_n .\]
In particular, if $X$ is arc-like, we can express $X$ in the form $X \approx \varprojlim \langle [-1,1],f_n \rangle$, and moreover, we may assume that for each $n \geq 1$, $f_n \colon [-1,1] \to [-1,1]$ is a nowhere constant, piecewise linear map \cite{brown1960}.  Bing \cite{bing1951} proved in 1951 that each arc-like continuum can be embedded in the plane.

A \emph{chain cover} of a continuum $X$ is a finite open cover $\mathcal{C} = \{U_1,\ldots,U_k\}$ of $X$, enumerated so that $U_i \cap U_j \neq \emptyset$ if and only if $|i-j| \leq 1$.  We often refer to the elements of a chain cover as \emph{links}.  Given $\varepsilon > 0$, we say that $\mathcal{C}$ is an $\varepsilon$-chain cover if $\mathrm{mesh}(\mathcal{C}) < \varepsilon$.  $X$ is \emph{chainable} if for each $\varepsilon > 0$ it admits an $\varepsilon$-chain cover.  It is well-known that a continuum $X$ is arc-like if and only if it is chainable.  We make use of this equivalence in Section~\ref{sec:main proofs}.

In \cite[Problem 4]{nadler1972}, Nadler asked, for any given $n$, which continua $X$ have the property (which he called ($\beta_n$)) that every compactification of a line with $X$ as remainder is embeddable in $\mathbb{R}^n$.  In his discussion around whether arc-like continua have property ($\beta_2$), he suggested that an answer to the following question, which came to be known as the Nadler-Quinn Problem, would be central: \textit{Given an arc-like continuum $X$ and a point $x \in X$, does there exist a plane embedding of $X$ for which $x$ is an accessible point?}  A point $x$ in a plane continuum $X \subset \mathbb{R}^2$ is \emph{accessible} if there exists an arc $A \subset \mathbb{R}^2$ such that $A \cap X = \{x\}$.

Recently, %%%the authors and Anu{\v{s}}i{\'c} answered the Nadler-Quinn Problem in the positive \cite{ammerlaan-anusic-hoehn2024}.  In the present paper, we obtain the following more general embedding theorem.
the Nadler-Quinn Problem was answered in the positive \cite{ammerlaan-anusic-hoehn2024}.  In the present paper, we obtain the following more general embedding theorem.

\begin{theorem}
\label{thm:ray}
Suppose $Y$ is a continuum of the form $Y = X \cup R$, where $X$ is an arc-like continuum, $R$ is a ray, $X \cap R = \emptyset$, and $\overline{R} \setminus R \subseteq X$.  Then $Y$ can be embedded in the plane.
\end{theorem}

In the special case that $\overline{R} \setminus R$ consists of a single point, this theorem coincides with the main result of \cite{ammerlaan-anusic-hoehn2024}.

Each continuum $Y$ of the form considered in Theorem~\ref{thm:ray} is simple-triod-like (possibly arc-like).  In general, a continuum is \emph{$S$-like}, for some continuum $S$, if it can be expressed as an inverse limit on spaces homeomorphic to $S$.  A \emph{simple triod} is a tree which is the union of three arcs which have one common endpoint and are otherwise pairwise disjoint.

The general question of which tree-like continua can be embedded in the plane is of significant interest due to its connections to other old and well-known problems in the field.  For example, it is relevant for the Plane Fixed Point Problem, which is one of the most important outstanding problems in continuum theory \cite[p.122]{bing1969}.  The problem asks if every non-separating plane continuum $X$ has the \emph{fixed point property}; i.e., is such that for any map $f \colon X \to X$, there exists a point which is fixed under $f$.  A plane continuum $X$ is \emph{non-separating} if $\mathbb{R}^2 \setminus X$ is connected; if $X$ is 1-dimensional, then it is non-separating if and only if it is tree-like \cite{manka2012}.  There have been several constructions of tree-like continua which do not have the fixed point property (see e.g.\ \cite{bellamy1980}, \cite{hernandezgutierrez-hoehn2018}, \cite{minc1992}, \cite{minc1996}, \cite{minc1999}, \cite{minc2000}, \cite{oversteegen-rogers1980}, \cite{oversteegen-rogers1982}).  If there exists a tree-like continuum which does not have the fixed point property and which can be embedded in the plane, then it would provide a negative answer to the Plane Fixed Point Problem.  In this context, we regard Theorem~\ref{thm:ray} as a contribution to the development of theory for discerning which tree-like continua are embeddedable in the plane.

In addition, Theorem~\ref{thm:ray} serves as a tool in our proof of the following theorem.

\begin{theorem}
\label{thm:line}
If $X$ is an arc-like continuum, then any compactification of a line with $X$ as remainder can be embedded in the plane.
\end{theorem}

In the notation of \cite{nadler1972}, Theorem~\ref{thm:line} states that every arc-like continuum has property ($\beta_2$).  It gives a class of circle- and noose-like continua which can be embedded in the plane.  A \emph{noose} is a space which is the union of a circle $C$ and an arc $A$ such that $C \cap A = \{e\}$, where $e$ is an endpoint of $A$.  See Figure~\ref{fig:circle noose like} for an example of a circle-like continuum and a noose-like continuum of the form considered in Theorem~\ref{thm:line}.

\begin{figure}
\begin{center}
\includegraphics[height=1in]{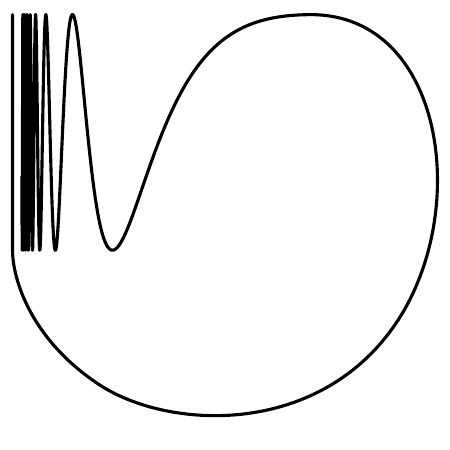}
\hspace{0.25in}
\includegraphics[height=1in]{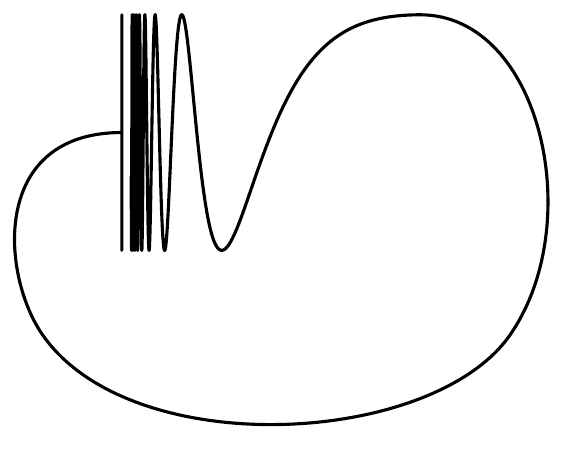}
\end{center}

\caption{Two variants of the Warsaw circle, each of which is a compactification of a line with an arc as the remainder.  The space on the left is circle-like, and the space on the right is noose-like.}
\label{fig:circle noose like}
\end{figure}

\section{Preliminaries}
\label{sec:prelim}

We first recall some notions and results from the papers \cite{ammerlaan-anusic-hoehn2023} and \cite{ammerlaan-anusic-hoehn2024} on the Nadler-Quinn problem.  Denote by $\mathbb{H}$ the half-plane $\{(x,y) \in \mathbb{R}^2 \colon x \geq 0\}$.

\begin{defn*}[Definition~8 of \cite{ammerlaan-anusic-hoehn2023})]
Let $f \colon [-1,1] \to [-1,1]$ be a map with $f(0) = 0$.  A \emph{radial departure} of $f$ is a pair $\langle x_1,x_2 \rangle$ such that $-1 \leq x_1 < 0 < x_2 \leq 1$ and either
\begin {enumerate}
\item $f((x_1,x_2)) = (f(x_1),f(x_2))$; or
\item $f((x_1,x_2)) = (f(x_2),f(x_1))$.
\end {enumerate}
We say a radial departure $\langle x_1,x_2 \rangle$ is \emph{positively oriented} (or simply \emph{positive}) if (1) holds and \emph{negatively oriented} (or simply \emph{negative}) if (2) holds.
\end{defn*}

The following result, Lemma~\ref{lem:no neg rad deps}, is extracted from the proof of the positive answer to the Nadler-Quinn Problem, Theorem~1.1 of \cite{ammerlaan-anusic-hoehn2024}.  There, it was sufficient to prove the lemma and then apply Lemma~\ref{lem:tuck} below.  For our purposes here, we will have use for both Lemmas~\ref{lem:no neg rad deps} and \ref{lem:tuck} rather than Theorem~1.1 of \cite{ammerlaan-anusic-hoehn2024} explicitly.

\begin{lemalph}
\label{lem:no neg rad deps}
Suppose $X = \varprojlim \langle [-1,1],f_n \rangle$ is an arc-like continuum and $x \in X$, where $x = \langle x_1,x_2,\ldots \rangle$ is such that $x_n = \pm 1$ for only finitely many $n$.  Then there exists a homeomorphism $h \colon X \to \varprojlim \langle [-1,1],f_n' \rangle$ such that each map $f_n'$ is piecewise linear, satisfies $f_n'(0) = 0$, and has no negative radial departures, and $h(x) = \langle 0,0,\ldots \rangle$.
\end{lemalph}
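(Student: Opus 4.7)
The plan is to obtain the required inverse system through two successive conjugations of the given one: first, a recentering that moves $x$ to the origin in every coordinate; second, a more delicate surgery that removes all negative radial departures from the bonding maps.

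\emph{Stage 1: recentering at $x$.} Because only finitely many of the $x_n$ equal $\pm 1$, I will first pass to a cofinal tail $\langle [-1,1], f_n \rangle_{n \geq N}$ of the inverse system, a standard operation that does not change the inverse limit up to homeomorphism, to arrange that $x_n \in (-1,1)$ for every $n$. For each $n$, I then pick a PL self-homeomorphism $\phi_n$ of $[-1,1]$ fixing $\pm 1$ and sending $x_n$ to $0$, and replace $f_n$ by $\widetilde f_n := \phi_n \circ f_n \circ \phi_{n+1}^{-1}$. The resulting bonding maps are nowhere constant and piecewise linear, they satisfy $\widetilde f_n(0) = 0$, and the coordinatewise map $\langle y_n \rangle \mapsto \langle \phi_n(y_n) \rangle$ is a homeomorphism from $X$ onto $\varprojlim \langle [-1,1], \widetilde f_n \rangle$ that carries $x$ to $\langle 0,0,\ldots \rangle$.

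\emph{Stage 2: eliminating negative radial departures.} This is the core of the lemma and is the precise content extracted from the proof of the main theorem of \cite{ammerlaan-anusic-hoehn2024}. Each $\widetilde f_n$ is PL and has only finitely many radial departures. I would modify the system by further conjugations by PL self-homeomorphisms of $[-1,1]$ that fix $0$ and $\pm 1$, chosen so that each negative radial departure of $\widetilde f_n$ is converted into a positive one. The geometric idea is that a negative radial departure $\langle a,b\rangle$ of a PL map $g$ with $g(0)=0$ corresponds to an orientation-reversing subarc of the graph of $g$ through the origin; this subarc can be reflected by a PL conjugation of the domain near $a$ or $b$, provided the conjugation is compensated for at an adjacent level of the inverse system. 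Iterating finitely often at each level eliminates all negative radial departures while preserving the PL and nowhere-constant properties and the condition $f_n'(0)=0$, and a bookkeeping argument confirms that $\langle 0,0,\ldots\rangle$ remains the image of $x$.

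\emph{Main obstacle.} The principal difficulty lies entirely in Stage 2: the per-level conjugations that straighten out negative radial departures must be compatible with the bonding maps so that the newly obtained inverse system $\varprojlim \langle [-1,1], f_n' \rangle$ is naturally homeomorphic to $\varprojlim \langle [-1,1], \widetilde f_n \rangle$, while simultaneously ensuring that correcting one negative radial departure does not create a new one elsewhere. Coherently organizing these corrections across all levels, and verifying that the limit of the resulting compatible system of conjugations yields the desired homeomorphism, is exactly the technical work carried out in \cite{ammerlaan-anusic-hoehn2024}, which this lemma packages as a standalone statement.
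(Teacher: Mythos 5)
There is a genuine gap, and it is precisely where you locate the ``core'' of the lemma: conjugation cannot eliminate negative radial departures, even in principle. Any PL self-homeomorphism of $[-1,1]$ fixing $0$ and $\pm 1$ is increasing, and if $h, g$ are increasing self-homeomorphisms fixing $0$, then for $x_1 < 0 < x_2$ the pair $\langle x_1, x_2 \rangle$ is a radial departure of $h \circ f \circ g$ if and only if $\langle g(x_1), g(x_2) \rangle$ is a radial departure of $f$ \emph{with the same orientation} (since $h$ is increasing and fixes $0$, it preserves which of the two image configurations occurs). So conjugating by such maps induces an orientation-preserving bijection between the radial departures of $\widetilde f_n$ and those of $f_n'$: no negative departure is ever removed. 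The only way conjugation touches orientations at all is via an orientation-reversing homeomorphism (a reflection fixing $0$ and swapping $\pm 1$), and that flips \emph{every} departure of the affected bonding map simultaneously; it cannot ``convert each negative radial departure into a positive one'' selectively, as your sketch asserts. A map with departures of both orientations -- which is the generic and the genuinely difficult case -- defeats the entire Stage 2 plan. Your closing paragraph then concedes the remaining work to \cite{ammerlaan-anusic-hoehn2024}, so the proposal is either wrong (where it makes a definite claim) or circular (where it defers).

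For comparison: the paper itself offers no internal proof of this lemma -- it is explicitly extracted from the proof of Theorem~1.1 of \cite{ammerlaan-anusic-hoehn2024} -- and the mechanism there is not conjugation. The new bonding maps $f_n'$ are genuinely different maps, related to the original system only through (approximately) commuting diagrams whose connecting maps are in general non-injective; the homeomorphism of inverse limits is then obtained from Mioduszewski-type criteria rather than from a coordinatewise conjugacy, which is exactly why the lemma concludes with an abstract homeomorphism $h$ rather than a level-by-level change of coordinates. The combinatorial construction of those diagrams (building on the radial-departure machinery of \cite{ammerlaan-anusic-hoehn2023}) is the entire content of the cited proof. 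Your Stage 1 (passing to a tail so that $x_n \in (-1,1)$, then conjugating by PL homeomorphisms sending $x_n$ to $0$, after first invoking \cite{brown1960} to make the $f_n$ PL and nowhere constant) is correct and standard, but it is the easy part; the lemma does not follow from it plus any amount of further conjugation.
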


\begin{lemalph}[Lemma~13 of \cite{ammerlaan-anusic-hoehn2023}]
\label{lem:tuck}
Suppose $f \colon [-1,1] \to [-1,1]$ is a map with $f(0) = 0$, all of whose radial departures have the same orientation.  Then for any $\varepsilon > 0$ there exists an embedding $\Phi$ of $[-1,1]$ into $\mathbb{H}$ such that $\Phi(0) = (0,0)$, $\Phi([-1,1]) \cap \partial \mathbb{H} = \{(0,0)\}$, and $\|\Phi(x) - (0,f(x))\| < \varepsilon$ for all $x \in [-1,1]$.
\end{lemalph}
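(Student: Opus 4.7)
The plan is to reduce to the case where $f$ is piecewise linear and then construct $\Phi$ explicitly in the form $\Phi(x) = (u(x), f(x))$, where $u \colon [-1,1] \to [0,\infty)$ is continuous with $u(0) = 0$, $u(x) > 0$ for $x \neq 0$, and $\sup u < \varepsilon$. With this ansatz the distance estimate $\|\Phi(x) - (0, f(x))\| < \varepsilon$ holds automatically, as do the conditions $\Phi(0) = (0,0)$ and $\Phi([-1,1]) \cap \partial \mathbb{H} = \{(0,0)\}$, so the only real task is to choose $u$ so that $\Phi$ is injective. Since the second coordinate already distinguishes points with different $f$-values, this reduces to requiring that $u$ separate the (finitely many) points of every nondegenerate fiber of $f$.

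First I would approximate $f$ uniformly within $\varepsilon/2$ by a piecewise linear map $\tilde f$ with $\tilde f(0)=0$, whose radial departures all share the same orientation as those of $f$; an embedding $\tilde \Phi$ for $\tilde f$ within $\varepsilon/2$ of $(0, \tilde f(\cdot))$ then lies within $\varepsilon$ of $(0, f(\cdot))$. So assume $f$ is itself piecewise linear with breakpoints $-1 = a_{-p} < \dots < a_0 = 0 < \dots < a_q = 1$, and by symmetry assume all radial departures of $f$ are positive. Take $u_+$ strictly increasing on $[0,1]$ from $0$ to some small value and $u_-$ strictly decreasing on $[-1,0]$ from some small value back to $0$; this automatically makes $\Phi|_{[0,1]}$ and $\Phi|_{[-1,0]}$ injective, leaving only the cross-condition that $u_+(x) \neq u_-(y)$ whenever $f(x) = f(y)$ with $x > 0 > y$.

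The positive-orientation hypothesis is precisely what makes this cross-condition achievable: heuristically, the absence of negative radial departures means that the two halves of $[-1,1]$ fold onto common $f$-values in an ``interleaved but non-crossing'' way, so one can consistently order the positive and negative preimages on each fiber. I would process the critical values of $f$ inductively from the extremes inward: at each step one records how many positive and negative preimages of a given level exist, and the positive-departure condition dictates a specific left-to-right planar ordering among them. Choosing the slopes of $u_\pm$ on each monotone subinterval to realize the prescribed orderings then yields the required $u$.

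The main obstacle is carrying out this combinatorial bookkeeping rigorously: one must verify that the forced orderings on nearby levels are compatible (so that $u_\pm$ can be piecewise linear and continuous) and never conflict globally. A natural route is induction on $p+q$, peeling off an outermost monotone piece of $f$ at each step; the outermost piece, by the positive-departure hypothesis, admits routing as a thin arc in $\mathbb{H}$ that lies off the embedding of the remaining shorter map produced by the inductive hypothesis. Translating the orientation assumption into the precise non-crossing statement that powers the induction is where I would expect the real work of the proof to sit.
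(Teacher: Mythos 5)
Your geometric reduction is sound as far as it goes: with the ansatz $\Phi(x) = (u(x), f(x))$, where $u \geq 0$ vanishes only at $0$ and $\sup u < \varepsilon$, the conditions $\Phi(0) = (0,0)$, $\Phi([-1,1]) \cap \partial \mathbb{H} = \{(0,0)\}$ and the $\varepsilon$-estimate are indeed automatic, and injectivity is exactly the cross-condition you isolate. But the proposal stops where the lemma begins. Injectivity of $\Phi$ amounts to choosing, for each pair consisting of a maximal monotone branch of $f {\restriction}_{[0,1]}$ and one of $f {\restriction}_{[-1,0]}$, a left/right order that is constant on the entire overlap of their value ranges (by the intermediate value theorem, any sign change in $u_+ - u_-$ along a common level range forces a crossing); turning points of $f$ propagate equalities among these choices, and the branches emanating from $0$ on the two sides carry their own constraints. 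The single-orientation hypothesis is precisely what rules out a cyclic conflict in this system, and your sketch never actually invokes a radial departure anywhere, positive or negative. An argument in which the hypothesis is never used cannot be completed as written: such a $\Phi$ need not exist when departures of both orientations are present (cf.\ \cite{ammerlaan-anusic-hoehn2023}), so the hypothesis must enter in an essential way. You acknowledge this (``where I would expect the real work of the proof to sit''), but that deferred step is the entire content of Lemma~\ref{lem:tuck}. Separately, insisting that $u$ be strictly monotone on each half is an unjustified strengthening: it forces every later fold of each half to lie strictly farther from $\partial \mathbb{H}$ than every earlier one, which is a constraint beyond injectivity whose compatibility with the cross-conditions you would also have to verify, or else drop.

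Two of your preparatory steps also fail as stated. First, the piecewise linear reduction: having all radial departures of one orientation is a condition at every scale around $0$, and a uniform approximation need not preserve it. If $f$ oscillates across $0$ near $0$, linear interpolation on any finite mesh makes $\tilde f$ monotone on the two segments adjacent to $0$; if the nodes satisfy $\tilde f(t_{-1}) > 0 > \tilde f(t_1)$, then pairs near these points become genuine negative radial departures of $\tilde f$, even though the oscillation of $f$ prevented image-exactness and $f$ had none. So the nodes must be chosen adaptively, and this requires a proof. (In the application in Proposition~\ref{prop:plane} the maps are already piecewise linear with no negative radial departures, courtesy of Lemma~\ref{lem:no neg rad deps}, but Lemma~\ref{lem:tuck} is stated and used for arbitrary continuous $f$.) Second, the peeling induction: truncating $f$ to $[-1, a_{q-1}]$ does preserve the hypothesis, since the departure condition is local to $(x_1, x_2)$, but the inductive step of routing the outermost monotone piece ``off the embedding'' is exactly as hard as the original problem. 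The new piece attaches at the image of $a_{q-1}$ and must sweep a prescribed interval of levels while staying $\varepsilon$-close to the $y$-axis, and the already-embedded arc can wall off that region with its caps unless the inductive hypothesis is strengthened to control from where the attachment point is accessible. Formulating and proving that strengthened invariant, as a function of the orientation hypothesis, is again precisely the missing non-crossing statement.
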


\section{Proofs of the main theorems}
\label{sec:main proofs}

We begin with two preliminary results.

\begin{lemma}
\label{lem:chain}
Suppose $Y$ is a continuum of the form $Y = X \cup R$, where $X$ is an arc-like continuum, $R$ is a ray, $X \cap R = \emptyset$, and $\overline{R} \setminus R \subseteq X$.  Let $\varepsilon > 0$, and suppose $\mathcal{C} = \{U_1,\ldots,U_k\}$ is an $\varepsilon$-chain cover of $X$ by open subsets of $Y$ such that $U_1 \cap \overline{R} \setminus R \neq \emptyset$.  Then there exists an $\varepsilon$-chain cover of $Y$.
\end{lemma}

\begin{proof}
Let $h$ be a homeomorphism from $[0,\infty)$ onto $R$.  Since $U_1 \cap \overline{R} \setminus R$ is nonempty, there exists $t > 0$ such that $h(t) \in U_1$ and $h([t,\infty)) \subset \bigcup \mathcal{C}$.  Let $U_1'$ be the open set $U_1 \setminus h([0,t])$.  Now, choose $t' > t$ such that $h([t,t']) \subset U_1$.  Then $\mathcal{C}_X := \{U_1'\} \cup \{U_i \setminus h([0,t']): 1 < i \leq k\}$ is a chain cover of $X$.  By covering $[0,t] \subset \mathbb{R}$ by a chain of sufficiently small open intervals in $[0,\infty)$ and taking their images under $h$, we may construct an $\varepsilon$-chain cover $\mathcal{C}_R = \{V_1,\ldots,V_m\}$ of $h([0,t])$, such that $V_j \cap \bigcup \mathcal{C}_X \neq \emptyset$ if and only if $j = 1$, and $V_1$ is disjoint from all links in $\mathcal{C}_X$ other than $U_1'$.  Then, $\mathcal{C}_R \cup \mathcal{C}_X$ is an $\varepsilon$-chain cover of $Y$.
\end{proof}

The next result is a stronger variant of Theorem~1.1 of \cite{ammerlaan-anusic-hoehn2024}, in which we take additional care to produce an embedding of an arc-like continuum $X$ in the half-plane $\mathbb{H}$ which can be covered by chains of round disks (i.e., balls in $\mathbb{R}^2$) of arbitrarily small mesh.  We remark that the assumption that $x_n = \pm 1$ for only finitely many $n$ in Proposition~\ref{prop:plane} could be removed, but we include it to keep the proof clean; the case where $x_n = \pm 1$ for infinitely many $n$ (which implies $x$ is an endpoint of $X$ -- see the proof of Theorem~\ref{thm:ray} below) will not be needed in the application of Proposition~\ref{prop:plane} below.

\begin{prop}
\label{prop:plane}
Let $X = \varprojlim \langle [-1,1],f_n \rangle$ be an arc-like continuum, let $x = \langle x_1,x_2,\ldots \rangle \in X$, and suppose that $x_n = \pm 1$ for only finitely many $n$.  Then there exists an embedding $e \colon X \to \mathbb{H}$ with $e(x) = (0,0)$ and such that for any $\varepsilon > 0$, there exists an $\varepsilon$-chain cover $\mathcal{C}$ of $e(X)$ satisfying:
\begin{enumerate}
\item each $U \in \mathcal{C}$ is a round disk in the plane; and
\item there exists a unique element $L \in \mathcal{C}$ such that $(0,0) \in L$.
\end{enumerate}
\end{prop}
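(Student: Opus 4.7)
The plan is to construct $e$ by following the embedding from the proof of Theorem~1.1 in \cite{ammerlaan-anusic-hoehn2024}, arranging for $e(X)$ to lie in an arbitrarily thin strip along the $y$-axis; the chain covers are then produced by round disks enclosing horizontal bands of this strip.

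First, I would apply Lemma~\ref{lem:no neg rad deps} to reduce (up to homeomorphism) to the case in which $x = \langle 0, 0, \ldots \rangle$ and each bonding map $f_n \colon [-1,1] \to [-1,1]$ is piecewise linear with no negative radial departures. Lemma~\ref{lem:tuck}, applied to each $f_n$ with small tuck parameters, supplies piecewise linear embeddings $\Phi_n \colon [-1,1] \to \mathbb{H}$ with $\Phi_n(0) = (0,0)$, $\Phi_n([-1,1]) \cap \partial \mathbb{H} = \{(0,0)\}$, and $\Phi_n$ close to $t \mapsto (0, f_n(t))$. Assembling these in the manner of \cite{ammerlaan-anusic-hoehn2024} produces the embedding $e \colon X \to \mathbb{H}$ with $e(x) = (0,0)$; by choosing the tuck parameters small enough, we arrange additionally that $e(X) \subset [0, \eta_0] \times [-1, 1]$ for any prescribed $\eta_0 > 0$.

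Given $\varepsilon > 0$, set $\eta := \varepsilon / 4$ and arrange $e(X) \subset [0, \eta] \times [-1, 1]$. Pick a partition $-1 = y_0 < y_1 < \cdots < y_M = 1$ with all band heights $h_j := y_j - y_{j-1}$ equal to $\eta$, shifted slightly so that no $y_j$ equals $0$; let $j_0$ be the unique index with $y_{j_0 - 1} < 0 < y_{j_0}$. For each $j$, let $D_j$ be an open round disk centered at $\bigl(\eta/2,\, (y_{j-1}+y_j)/2\bigr)$ with radius marginally larger than $\sqrt{\eta^2 + h_j^2}/2$, so that $D_j$ contains the closed rectangle $[0, \eta] \times [y_{j-1}, y_j]$ and hence covers $e(X) \cap \{(x, y) \colon y \in [y_{j-1}, y_j]\}$.

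The collection $\mathcal{C} := \{D_1, \ldots, D_M\}$ is then an $\varepsilon$-chain cover of $e(X)$ by round disks: its mesh is just above $\sqrt{2}\,\eta < \varepsilon$; consecutive disks overlap because both contain the segment $[0, \eta] \times \{y_j\}$; non-consecutive disks are disjoint because the vertical distance between their centers is at least $2\eta$, exceeding the sum of their radii. The main obstacle --- and the precise role of Lemma~\ref{lem:tuck} --- is the uniqueness of the link containing $(0,0)$: for the circumscribing disk of $[0, \eta] \times [y_{j-1}, y_j]$, the origin lies in $D_j$ exactly when $y_{j-1}$ and $y_j$ have opposite signs (reducing after squaring the distance-versus-radius condition to the inequality $4 y_{j-1} y_j < 0$), which occurs only for $j = j_0$.
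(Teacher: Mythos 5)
Your construction of the embedding itself (Lemma~\ref{lem:no neg rad deps}, Lemma~\ref{lem:tuck}, and the machinery of \cite{ammerlaan-anusic-hoehn2024}) matches the paper's starting point, but the chain-cover half of your argument has a fatal quantifier problem. The proposition demands a \emph{single} embedding $e$ admitting $\varepsilon$-chain covers for \emph{every} $\varepsilon > 0$: $e$ is fixed before $\varepsilon$ is given. Your disks circumscribe rectangles whose width equals the width $\eta_0$ of the strip containing $e(X)$, so their mesh is bounded below by $\eta_0$; the step ``Given $\varepsilon > 0$, set $\eta := \varepsilon/4$ and arrange $e(X) \subset [0,\eta] \times [-1,1]$'' amounts to re-choosing the embedding \emph{after} $\varepsilon$ is given. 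That proves only the weaker statement that for each $\varepsilon$ there exists \emph{some} embedding with an $\varepsilon$-chain cover, which is not what the proposition asserts and is not enough for its use in Theorem~\ref{thm:ray}, where one fixed embedding must carry chains at all scales (indeed, there, nested ones). Nor can the scheme be repaired: a single embedding with $e(X)$ contained in arbitrarily thin vertical strips would force $e(X)$ to lie in a vertical segment, so $X$ would be an arc; for a pseudo-arc or a $\sin(1/x)$-type continuum, no straight vertical stack of small round disks covers $e(X)$ at fine scales.

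At small $\varepsilon$ the chain covers must bend with the crookedness of the embedded continuum, and this is exactly what the paper's proof arranges. It runs the Anderson--Choquet embedding theorem: piecewise linear arcs $\Omega_n([-1,1]) \subset \mathbb{H}$ are built with $\Omega_{n+1}$ close to $\Omega_n \circ f_n$ and contained in $\bigcup \mathcal{C}_n$, where $\mathcal{C}_{n+1}$ is a small-mesh chain of round disks chosen \emph{along the arc} $\Omega_{n+1}([-1,1])$ and nested inside $\bigcup \mathcal{C}_n$; since the limit set $e(X)$ lies in $\bigcup \mathcal{C}_n$ for every $n$, each $\mathcal{C}_n$ is a chain cover of $e(X)$ of the required kind. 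Uniqueness of the link at $(0,0)$ comes from the fact that each approximating arc meets $\partial \mathbb{H}$ only at the origin --- this is what Lemma~\ref{lem:tuck}, conjugated by the straightening homeomorphism $H$, secures at every stage; your $4y_{j-1}y_j < 0$ computation is correct as far as it goes, but it only ever produces the coarsest chain. To fix your proof you would need to replace the horizontal slicing by chains following the approximating arcs at every scale, which is essentially the paper's argument.
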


\begin{proof}
In light of the assumption that only finitely many $x_n$ are endpoints of $[-1,1]$, we can apply Lemma~\ref{lem:no neg rad deps}.  As a result, we may assume that for each $n \geq 1$, $f_n$ is a piecewise linear map with $f_n(0) = 0$ and no negative radial departures, and $x_n = 0$.

To construct the desired embedding, we appeal to the Anderson-Choquet Embedding Theorem \cite{anderson-choquet1959}.  It suffices to show that, for any $n \geq 1$, if $\Omega_n$ is any embedding of $[-1,1]$ into $\mathbb{H}$ such that $\Omega_n(0) = (0,0)$, then for any chain cover $\mathcal{C}_n$ of $\Omega_n([-1,1])$ and any $\varepsilon_n > 0$, there exists a piecewise linear embedding $\Omega_{n+1} \colon [-1,1] \to \mathbb{H} \cap \bigcup\mathcal{C}_n$ such that
\begin{itemize}
\item $\Omega_{n+1}(0) = (0,0)$;
\item $\|\Omega_{n+1}(p) - \Omega_n(f_n(p))\| < \varepsilon_n$ for all $p \in [-1,1]$; and
\item there exists a chain cover $\mathcal{C}_{n+1}$ of $\Omega_{n+1}([-1,1])$ which satisfies properties (1) and (2), and such that $\overline{\bigcup \mathcal{C}_{n+1}} \subset \bigcup \mathcal{C}_n$.
\end{itemize}
According to the Anderson-Choquet Embedding Theorem, provided the values $\varepsilon_n$ are chosen to converge to $0$ quickly enough and such embeddings $\Omega_n$ are given, the limit of the images $\Omega_n([-1,1]) \subset \mathbb{R}^2$ (in the Vietoris topology) is homeomorphic to $X$.  Moreover, the condition $\overline{\bigcup \mathcal{C}_{n+1}} \subset \bigcup \mathcal{C}_n$ in the third bullet above ensures that this limit is covered by each of the chains $\mathcal{C}_n$.

To construct this embedding $\Omega_{n+1}$, we partially follow the proof of Proposition~7.2 of \cite{ammerlaan-anusic-hoehn2023}.

Let $A$ denote the straight segment in $\mathbb{R}^2$ from $(0,0)$ to $(-1,0)$ and let $I$ denote the straight segment from $(0,-1)$ to $(0,1)$.  Let $H \colon \mathbb{R}^2 \to \mathbb{R}^2$ be a homeomorphism such that $H(\Omega_n(p)) = (0,p)$ for each $p \in [-1,1]$, and $H {\restriction}_A = \mathrm{id}_A$.  For some small $\varepsilon^* >0$, apply Lemma~\ref{lem:tuck} to construct an embedding $\Phi \colon [-1,1] \to \mathbb{H}$ such that $\Phi(0) = (0,0)$, $\Phi([-1,1]) \cap \partial \mathbb{H} = \{(0,0)\}$ and $\|\Phi(p) - (0,f_n(p))\| < \varepsilon^*$ for all $p \in [-1,1]$.  Let $\Omega_{n+1} = H^{-1} \circ \Phi$.  Then $\Omega_{n+1}(0) = (0,0)$ and, by uniform continuity of $H$ in a neighbourhood of $A \cup I$, for sufficiently small $\varepsilon^*$, $\Omega_{n+1}$ is an embedding of $[-1,1]$ into $\mathbb{H}$ with $\|\Omega_{n+1}(p) - \Omega_n(f_n(p))\| < \varepsilon_n$ for all $p \in [-1,1]$.  By approximating if necessary, we may additionally ensure that $\Omega_{n+1}$ is piecewise linear.  Further, we may assume that $\varepsilon_n$ is small enough so that $\Omega_{n+1}([-1,1]) \subset \bigcup \mathcal{C}_n$.

Since $\Omega_{n+1}([-1,1])$ is a piecewise linear arc, it is clear that there exists an $\varepsilon_n$-chain cover $\mathcal{C}_{n+1}$ of $\Omega_{n+1}([-1,1])$ satisfying properties (1) and (2) of the Proposition.  Clearly, by taking the mesh of $\mathcal{C}_{n+1}$ sufficiently small, we may ensure that $\overline{\bigcup \mathcal{C}_{n+1}} \subset \bigcup \mathcal{C}_n$.
\end{proof}

\newtheorem*{thm ray}{Theorem~\ref{thm:ray}}
\begin{thm ray}
Suppose $Y$ is a continuum of the form $Y = X \cup R$, where $X$ is an arc-like continuum, $R$ is a ray, $X \cap R = \emptyset$, and $\overline{R} \setminus R \subseteq X$.  Then $Y$ can be embedded in the plane.
\end{thm ray}

\begin{proof}
Express $X$ as an inverse limit, $X = \varprojlim \langle [-1,1],f_m \rangle$.  Choose some point $x = \langle x_1,x_2,\ldots \rangle \in \overline{R} \setminus R$.  If $x_m = \pm 1$ for infinitely many $m$, then $x$ is an endpoint of $X$, meaning that for each $\varepsilon > 0$, there is an $\varepsilon$-chain cover $\{U_1,\ldots,U_k\}$ of $X$ such that $x \in U_1$.  Indeed, such a cover can be constructed by taking sufficiently large $m$ for which $x_m = \pm 1$, taking a sufficiently small mesh chain cover $\{I_1,\ldots,I_k\}$ of $[-1,1]$ by open intervals, and letting $U_j = \pi_m^{-1}(I_j)$ for each $j \in \{1,\ldots,k\}$, where $\pi_m \colon X \to [-1,1]$ is the $m$-th coordinate projection.  It then follows from Lemma~\ref{lem:chain} that $Y$ is arc-like, and therefore embeddable in the plane \cite{bing1951}.

Suppose, then, that $x_m = \pm 1$ for only finitely many $m$.  Applying Proposition~\ref{prop:plane}, we get an embedding $e \colon X \to \mathbb{H}$ with $e(x) = (0,0)$ and such that for any $\varepsilon > 0$, there exists an $\varepsilon$-chain cover $\mathcal{C}$ of $e(X)$ by round disks in $\mathbb{R}^2$ which has a unique element containing $(0,0)$.  Recursively, we may choose, for each $n \geq 1$, a $\frac{1}{n}$-chain cover $\mathcal{C}_n = \{U_1^n,\ldots,U_{k(n)}^n\}$ of $e(X)$ such that
\begin{enumerate}
\item each element $U \in \mathcal{C}_n$ is a round disk;
\item for each $U \in \mathcal{C}_{n+1}$, there exists $V \in \mathcal{C}_n$ such that $\overline{U} \subset V$; and
\item there exists a unique link $L_n \in \mathcal{C}_n$ containing $(0,0)$.
\end{enumerate}

For each $n \geq 1$ and each element $U \in \mathcal{C}_n$, consider the set $e^{-1}(U \cap e(X))$.  This is an open subset of $X$, as a subspace of $Y$, and the set $\{e^{-1}(U \cap e(X)): U \in \mathcal{C}_n\}$ is a chain cover of $X$ by open subsets of $X$.  By enlarging these sets slightly in $Y$, we may construct corresponding sets $U'$ which are open in $Y$, such that, for each $n \geq 1$:
\begin{enumerate}[label=(\arabic{*}$'$)]
\item $U' \cap X = e^{-1}(U \cap e(X))$ for each $U \in \mathcal{C}_n$;
\item for each $i,j \in \{1,\ldots,k(n)\}$, $(U_j^n)' \cap (U_i^n)' \neq \emptyset$ if and only if $|i-j| \leq 1$, so that the set $\mathcal{C}_n' = \{(U_1^n)',\ldots,(U_{k(n)}^n)'\}$ is a chain cover of $X$ in $Y$; and
\item for each $U \in \mathcal{C}_{n+1}$ and each $V \in \mathcal{C}_n$, if $\overline{U} \subset V$ then $\overline{U'} \subset V'$.
\end{enumerate}
It is straightforward to construct such collections $\{U': U \in \mathcal{C}_n\}$ by induction on $n$.  In particular, property (2$'$) is easy to obtain since we are working with (finite) chains of open sets in the metric space $Y$; alternatively, one could also appeal to more general theory for (2$'$), such as \cite[Theorem \S 21.XI.2]{kuratowski1966}.

Let $h'$ be a homeomorphism of $[0,\infty)$ onto $R$.  Choose points in the interval $[0,\infty)$ as follows.  Let $t_0 = 0$ and, for each $n \geq 1$, let $t_n \in (t_{n-1}, \infty)$ be such that $h'(t_n) \in ({L}_n)'$ and $h'([t_n,\infty)) \subset \bigcup \mathcal{C}_n'$.  Note that $t_n \to \infty$ as $n \to \infty$.  For each interval $[t_n,t_{n+1}]$, choose a partition $t_n = r_0^n < r_1^n < \cdots < r_{m(n)}^n = t_{n+1}$, fine enough so that the image of each subinterval under $h'$ is contained in an element of $\mathcal{C}_n'$; that is, for each $1 \leq \ell \leq m(n)$, there exists $1 \leq j(n,\ell) \leq k(n)$ with $h'([r_{\ell-1}^n,r_\ell^n]) \subset (U_{j(n,\ell)}^n)'$.

We now complete our construction of a plane embedding of $Y$, by constructing an embedding of the ray $R$ which follows the same pattern through the links $U_j^n$ in $\mathbb{R}^2$ as $R$ does through the links $(U_j^n)'$ in $Y$.  Choose an embedding $h \colon [0,\infty) \to \mathbb{R}^2$ with the following properties:
\begin{itemize}
\item $h(t_n) \in \partial L_n \setminus (\mathbb{H} \cup \bigcup \mathcal{C}_n)$ for each $n \geq 1$;
\item $h {\restriction}_{[0,t_1]}$ is a homeomorphism onto an arc in $\mathbb{R}^2 \setminus \bigcup \mathcal{C}_1$, and;
\item for each $n \geq 1$, $h {\restriction}_{[t_n,t_{n+1}]}$ is a homeomorphism onto an arc in
\[ \left( \bigcup \mathcal{C}_n \setminus \bigcup \mathcal{C}_{n+1} \right) \cup \{h(t_n)\} \]
such that, for each $1 \leq \ell \leq m(n)$, $h([r_{\ell-1}^n,r_\ell^n]) \subset U_{j(n,\ell)}^n$.
\end{itemize}

Refer to Figure~\ref{fig:ray embedding} for an illustration of the image of such a homeomorphism $h$ on a sample segment $[t_n,t_{n+1}]$.

\begin{figure}
\begin{center}
\includegraphics{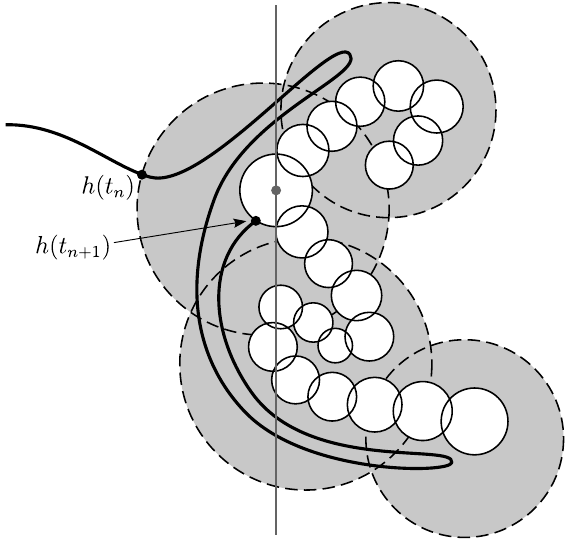}
\end{center}

\caption{An example illustration of two chain covers, $\mathcal{C}_n$ (in grey) and $\mathcal{C}_{n+1}$ (in white), of $X$, together with a portion of the embedding $h$ of the ray $[0,\infty)$ in $\mathbb{R}^2$, particularly the portion $h {\restriction}_{[t_n,t_{n+1}]}$.  The vertical line represents the $y$-axis $\partial \mathbb{H}$, with the origin marked with a grey dot.  Supposing that the links of $\mathcal{C}_n$ are enumerated, from top to bottom, $U^n_1,U^n_2,U^n_3,U^n_4$, so that $L_n = U^n_2$, in this example it is assumed that the pattern made by $h'$ on $[t_n,t_{n+1}]$ is as follows: $m(n) = 7$, and the numbers $j(n,\ell)$, for $1 \leq \ell \leq 7$, are $2,1,2,3,4,3,2$.}
\label{fig:ray embedding}
\end{figure}

Clearly, $h$ is a homeomorphism onto a ray $h([0,\infty))$ in the plane which does not intersect $e(X)$.  We claim that the function $\phi \colon Y \to \mathbb{R}^2$ given by $\phi {\restriction}_R = h \circ (h')^ {-1}$ and $\phi {\restriction}_X = e$ is a homeomorphism onto $h([0,\infty)) \cup e(X)$.  To prove this, it suffices to show that if a sequence $\langle h'(p_i) \rangle \subset R$ converges to a point $x \in X$, then $\langle h(p_i) \rangle$ converges to $e(x)$.

Let $n \geq 1$.  Let $\left( U_{j_x}^n \right)' \in \mathcal{C}_n'$ be an element which contains $x$.  Since $h'(p_i) \to x$, there exists an integer $i(n) \geq 1$ such that $h'(p_i) \in \left( U_{j_x}^n \right)'$ for all $i \geq i(n)$.  Let $i \geq i(n)$, and let $k \geq n$ and $1 \leq \ell \leq m(k)$ be such that $p_i \in [r_{\ell-1}^k,r_\ell^k]$, so that $h'(p_i) \in (U_{j(k,\ell)}^k)'$.  By properties (2) and (3$'$) above, there exists $1 \leq j \leq k(n)$ such that $U_{j(k,\ell)}^k \subset U_j^n$ and $(U_{j(k,\ell)}^k)' \subset (U_j^n)'$.  Thus $h'(p_i) \in \left( U_{j_x}^n \right)' \cap (U_j^n)'$ and, by property (2$'$), it follows that $|j-j_x| \leq 1$ and $U_{j_x}^n \cap U_j^n$ is nonempty as well.  According to the construction of $h$, we have $h(p_i) \in U_{j(k,\ell)}^k$, and so $h(p_i) \in U_j^n$.  Since $e(x) \in U_{j_x}^n$ and $U_{j_x}^n \cap U_j^n \neq \emptyset$, we conclude that the distance between $h(p_i)$ and $e(x)$ is less than $\frac{2}{n}$.  Thus, the sequence $\langle h(p_i) \rangle$ converges to $e(x)$, as desired.
\end{proof}

\newtheorem*{thm line}{Theorem~\ref{thm:line}}
\begin{thm line}
If $X$ is an arc-like continuum, then any compactification of a line with $X$ as remainder can be embedded in the plane.
\end{thm line}

\begin{proof}
Let $Y$ be a continuum such that $Y = X \cup L$ where $L$ is a line and $\overline{L} \setminus L = X$.  Let $h \colon \mathbb{R} \to L$ be a homeomorphism, and let $R_1 = h((-\infty,0])$ and $R_2 = h([0,\infty))$.  Then $R_1$ and $R_2$ are rays such that $L = R_1 \cup R_2$, and so $X = (\overline{R_1} \setminus R_1) \cup (\overline{R_2} \setminus R_2)$.  From Lemma~\ref{lem:chain} we conclude that either $X \cup R_1$ is arc-like or $X \cup R_2$ is arc-like -- indeed, for any chain cover $\mathcal{C} = \{U_1,\ldots,U_k\}$ of $X$ by open subsets of $Y$, either $U_1 \cap \overline{R_1} \setminus R_1 \neq \emptyset$ or $U_1 \cap \overline{R_2} \setminus R_2 \neq \emptyset$.  Assume that $X \cup R_1$ is arc-like.

Now we can apply Theorem~\ref{thm:ray}, with $X \cup h((-\infty,-1])$ as our arc-like continuum and $h([1,\infty))$ as our ray, to obtain an embedding $e \colon X \cup L \setminus h((-1,1)) \to \mathbb{R}^2$.  The endpoints $e \circ h(-1)$ and $e \circ h(1)$ are accessible points of $\mathrm{Im}(e)$, because the domain $\mathbb{R}^2 \setminus \mathrm{Im}(e)$ is locally connected at these points (see, e.g., \cite[Theorem~14.4]{newman1951}); alternatively, we could just as well have embedded $X \cup L \setminus h((-\frac{1}{2},\frac{1}{2}))$ in the plane -- an embedding of our initial continuum with arcs appended at the endpoints.  Moreover, since $\mathrm{Im}(e)$ is a tree-like continuum and, thus, does not separate the plane, we can draw an arc $A$ in the plane such that $A \cap \mathrm{Im}(e) = \{\textrm{endpoints of } A\} = \{e \circ h(-1),e \circ h(1)\}$.  Clearly $A \cup \mathrm{Im}(e) \subset \mathbb{R}^2$ is homeomorphic to $Y$.
\end{proof}

\bibliographystyle{amsplain}
\bibliography{CompactificationsR}

\providecommand{\bysame}{\leavevmode\hbox to3em{\hrulefill}\thinspace}
\providecommand{\MR}{\relax\ifhmode\unskip\space\fi MR }
% \MRhref is called by the amsart/book/proc definition of \MR.
\providecommand{\MRhref}[2]{%
  \href{http://www.ams.org/mathscinet-getitem?mr=#1}{#2}
}
\providecommand{\href}[2]{#2}
\begin{thebibliography}{10}

\bibitem{ammerlaan-anusic-hoehn2023}
Andrea Ammerlaan, Ana Anu{\v{s}}i{\'c}, and Logan~C. Hoehn, \emph{Radial
  departures and plane embeddings of arc-like continua}, Houston J. Math.
  \textbf{49} (2023), no.~3, 637--666. \MR{4845204}

\bibitem{ammerlaan-anusic-hoehn2024}
\bysame, \emph{The {N}adler-{Q}uinn problem on accessible points of arc-like
  continua}, Adv. Math. \textbf{480} (2025), Paper No. 110491, 37. \MR{4947961}

\bibitem{anderson-choquet1959}
R.~D. Anderson and Gustave Choquet, \emph{A plane continuum no two of whose
  nondegenerate subcontinua are homeomorphic: {A}n application of inverse
  limits}, Proc. Amer. Math. Soc. \textbf{10} (1959), 347--353. \MR{105073}

\bibitem{bellamy1980}
David~P. Bellamy, \emph{A tree-like continuum without the fixed-point
  property}, Houston J. Math. \textbf{6} (1980), no.~1, 1--13. \MR{575909}

\bibitem{bing1951}
R.~H. Bing, \emph{Snake-like continua}, Duke Math. J. \textbf{18} (1951),
  653--663. \MR{43450}

\bibitem{bing1969}
\bysame, \emph{The elusive fixed point property}, Amer. Math. Monthly
  \textbf{76} (1969), 119--132. \MR{236908}

\bibitem{brown1960}
Morton Brown, \emph{Some applications of an approximation theorem for inverse
  limits}, Proc. Amer. Math. Soc. \textbf{11} (1960), 478--483. \MR{115157}

\bibitem{hernandezgutierrez-hoehn2018}
Rodrigo Hern\'{a}ndez-Guti\'{e}rrez and L.~C. Hoehn, \emph{A fixed-point-free
  map of a tree-like continuum induced by bounded valence maps on trees},
  Colloq. Math. \textbf{151} (2018), no.~2, 305--316. \MR{3769686}

\bibitem{kuratowski1966}
K.~Kuratowski, \emph{Topology. {V}ol. {I}}, new ed., Academic Press, New
  York-London; Pa\'nstwowe Wydawnictwo Naukowe [Polish Scientific Publishers],
  Warsaw, 1966, Translated from the French by J. Jaworowski. \MR{217751}

\bibitem{manka2012}
Roman Ma\'{n}ka, \emph{Results and problems in fixed point theory for tree-like
  continua}, Topology Proc. \textbf{39} (2012), 113--130. \MR{2819912}

\bibitem{minc1992}
Piotr Minc, \emph{A tree-like continuum admitting fixed point free maps with
  arbitrarily small trajectories}, Topology Appl. \textbf{46} (1992), no.~2,
  99--106. \MR{1184108}

\bibitem{minc1996}
\bysame, \emph{A periodic point free homeomorphism of a tree-like continuum},
  Trans. Amer. Math. Soc. \textbf{348} (1996), no.~4, 1487--1519. \MR{1321584}

\bibitem{minc1999}
\bysame, \emph{A weakly chainable tree-like continuum without the fixed point
  property}, Trans. Amer. Math. Soc. \textbf{351} (1999), no.~3, 1109--1121.
  \MR{1451610}

\bibitem{minc2000}
\bysame, \emph{A hereditarily indecomposable tree-like continuum without the
  fixed point property}, Trans. Amer. Math. Soc. \textbf{352} (2000), no.~2,
  643--654. \MR{1695031}

\bibitem{nadler1972}
Sam~B. Nadler, Jr., \emph{Some results and problems about embedding certain
  compactifications}, Proceedings of the {U}niversity of {O}klahoma {T}opology
  {C}onference {D}edicated to {R}obert {L}ee {M}oore ({N}orman, {O}kla., 1972),
  Univ. of Oklahoma, Norman, Okla., 1972, pp.~222--233. \MR{0362256}

\bibitem{nadler-quinn1972}
Sam~B. Nadler, Jr. and J.~Quinn, \emph{Embeddability and structure properties
  of real curves}, Memoirs of the American Mathematical Society, vol. No. 125,
  American Mathematical Society, Providence, RI, 1972. \MR{353278}

\bibitem{nadler-quinn1973}
\bysame, \emph{Embedding certain compactifications of a half-ray}, Fund. Math.
  \textbf{78} (1973), no.~3, 217--225. \MR{321011}

\bibitem{newman1951}
M.~H.~A. Newman, \emph{Elements of the topology of plane sets of points},
  Cambridge, at the University Press, 1951, 2nd ed. \MR{44820}

\bibitem{oversteegen-rogers1980}
Lex~G. Oversteegen and James~T. Rogers, Jr., \emph{An inverse limit description
  of an atriodic tree-like continuum and an induced map without a fixed point},
  Houston J. Math. \textbf{6} (1980), no.~4, 549--564. \MR{621749}

\bibitem{oversteegen-rogers1982}
\bysame, \emph{Fixed-point-free maps on tree-like continua}, Topology Appl.
  \textbf{13} (1982), no.~1, 85--95. \MR{637430}

\end{thebibliography}

\end{document}